\newtheorem{thm}{Theorem}[section]
 \newtheorem{cor}[thm]{Corollary}
 \newtheorem{lem}[thm]{Lemma}
 \newtheorem{prop}[thm]{Proposition}
 \theoremstyle{definition}
 \newtheorem{defn}{Definition}[section]
 \theoremstyle{remark}
 \newtheorem{rem}{Remark}[section]
 \numberwithin{equation}{section}
\DeclareMathOperator{\im}{Im}
\DeclareMathOperator{\re}{Re}
\DeclareMathOperator{\supp}{supp}
\def\R{\mathbb R}
\def\C{\mathbb C}
\def\N{\mathbb N}
\def\D{\mathbb D}
\def\S{\mathfrak{S}}
\def\T{\mathbb T}
\def\vphi{\varphi}
\def\i{\mathrm{i}}
\def\I{\mathcal{I}}
\def\fin{\mathrm{fin}}
\def\Ad{Ad}
\def\Tm{\mathrm{Tm}}
\def\d{\mathrm{d}}
\def\k{\kappa}
\def\pa{\partial}
\def\l{\ell}
\def\Stm{\mathrm{Stm}}
\begin{document}
\title{Optimization of quasi-normal eigenvalues for
Krein-Nudelman strings}
\author{}
\date{}
\maketitle
{\center {\large Illya M. Karabash } $^{\text{a,*}}$\\[4mm]

{\small $^{\text{a,*}}$  Institute of Applied Mathematics and Mechanics of NAS of Ukraine,

 R. Luxemburg str. 74, Donetsk, 83114, Ukraine.

E-mail addresses: i.m.karabash@gmail.com, karabashi@mail.ru

$^*$Corresponding author

}}

\begin{abstract}
The paper is devoted to optimization of resonances for Krein strings
with total mass and statical moment constraints. The problem is to
design for a given $\alpha \in \R$ a string that has a resonance on
the line $\alpha + \i \R$ with a minimal possible modulus of the
imaginary part. We find optimal resonances and strings explicitly.
\end{abstract}

\quad\\
MSC-classes: 49R05, 78M50, 35P25, 47N50\\
\quad\\
Keywords: Krein string, resonance optimization, quasi-eigenvalue

\section{Introduction}

Recently the increasing interest in loss mechanisms of structured optical and mechanical systems has
 given rise to spectral optimization problems for dissipative models involving wave equations in inhomogeneous media,
 see e.g. \cite{AASN03} and references therein.
The question is how to design a system with very high or very low
loss of energy for oscillations with frequencies in a given range. The rate of energy decay
is closely connected to imaginary parts of eigenvalues of the
corresponding non-self-adjoint operator. In the paper, these
eigenvalues are called quasi-(normal) eigenvalues. Naively, the
closer quasi-eigenvalues to the real axis $\R$, the less the rate of
energy decay.

It seems that the systematic study of eigenvalue's maximization and
minimization problems associated with self-adjoint elliptic
operators was initiated by M.G.~Krein \cite{K51}. While there exists an extensive
literature on spectral optimization associated with self-adjoint
elliptic operators, an analytic background for spectral optimization
problems involving non-self-adjoint operators is not well developed.
A possible explanation for this fact is that, for self-adjoint
problems, eigenvalues move on the real line and do not have root
eigenfunctions of higher order. This leads to a relatively simple
statement of the optimization problem and to a relatively simple
perturbation theory. Quasi-eigenvalues' behavior is much more
complex.

The goal of the present paper is to study quasi-eigenvalue
optimization problems analytically for Krein strings with
dissipation at one end.

In the settings of this paper, \emph{a (Krein) string} is the
interval $(-\infty, 1]$ carrying a dispersed mass, which is
represented by a locally bounded nonnegative Borel measure $\d M$.
We will speak about the string $\d M$ because the string is
completely determined by the measure. If the closed support $\supp
\d M$ of the measure is finite, the string is called \emph{regular}
and, by definition, \emph{the left end} $a_1$ of a string  is the
left end of $\supp \d M$. In the trivial case when $\d M $ is the zero measure, we put $a_1 =1$. We denote \emph{the class of regular
strings} by $\S_\fin$.

The quasi-eigenvalue problem for a regular string
 \begin{eqnarray} \label{e dMdx eq Int}
- \frac{\d^2}{\d M \d x} y(x)= \k^2 y(x) , \ \ \pa_x^- y (a_1) = 0,
\ \ y(1) = \frac{i}{\k} \, \pa_x^+ y(1)
\end{eqnarray}
is considered. Here the Krein-Feller differential expression
$\frac{\d^2}{\d M \d x} $ can be understood in the integral sense, and $ \pa_x^{+(-)}
$ is a properly defined right-hand (resp., left-hand) derivative
(see e.g. \cite{KK68_II,A75,DM76} and Section \ref{s sol} for
details). Problem (\ref{e dMdx eq Int}) corresponds to free
transverse harmonic oscillations of a string with the left end
sliding without friction and the right end $x=1$ with friction
proportional to the velocity of motion.

Eigen-parameters $\k \in \C$ such that (\ref{e dMdx eq Int}) has a
nonzero solution $y $ will be called \emph{quasi-eigenvalues}.
Corresponding eigenfunctions $y$ are called \emph{quasi-normal
modes}. The real part $\alpha=\re \k $ of the quasi-eigenvalue is the
\emph{frequency} of oscillations corresponding to the quasi-normal
mode $y$, the imaginary part $\beta = \im \k$ is always positive and
characterizes the \emph{rate of decay} of the oscillations.

Several other names for $\k$ are used, sometimes in slightly
different settings: dissipation frequencies \cite{KN79,KN89},
resonances and quasi-normal levels (in connection with the
time-independent Schr\"{o}dinger equation and in the Physics
literature).

The set of quasi-eigenvalues $\k$ of the string $\d M$ is denoted by
$K (\d M) $. It is known that $K (\d M) \subset \C_+$, that
quasi-eigenvalues are isolated, and that $\infty$ is their only
possible accumulation point (see e.g. \cite{KN79,KN89} and Section
\ref{s tools}).

Constraining for regular strings \emph{the total mass}
\[
\Tm_M :=
\int_{(-\infty,1]} \d M
\]
and \emph{the statical moment} with respect to (w.r.t.) the right end
\[
\Stm_M := \int_{-\infty}^1 (1-x) \d M ,
\]
we define for $m,S>0$ the admissible  family
\begin{eqnarray} \label{e wtAdmes}
\Ad = \Ad (m,S) :=  \{ \d M \in \S_\fin \ : \ 0 < \Tm_M \leq m , \ 0
\leq \Stm_M \leq S \} .
\end{eqnarray}

\emph{The optimization problem is}
\begin{itemize}
\item[(i)] for $\alpha \in \R$, to find
\begin{equation} \label{e Ialpha}
\I (\alpha) \ := \ \inf \{ \im \k \ : \ \re \k = \alpha \text{ and } \k \in K (\d M)
\text{ for certain } \d M \in \Ad \},
\end{equation}
\item[(ii)]
to find all the strings $\d M$ such that $\alpha + \i \I (\alpha) \in K (\d M)$, or,
if such a string does not exist, to find a sequence of strings
$\{ \d M^{(n)} \}_{n=1}^\infty \subset \Ad $ such that $\alpha + \i \beta_n \in K (\d M^{(n)})$
with $\beta_n \to \I (\alpha)+0$ as $n \to \infty$.
\end{itemize}

In the case when there exists $\d M \in \Ad$ such that $\alpha + \i \I (\alpha) \in K (\d M)$
(i.e., when the minimum in (\ref{e Ialpha}) is achieved), we call $\k^{[\alpha]} := \alpha + \i \I (\alpha) $
\emph{optimal quasi-eigenvalue} and call $\d M$ \emph{optimal string} for the frequency $\alpha$.

In Section \ref{s sol}, we solve this optimization problem.  It occurs that for $\alpha \not \in ( -S^{-1/2} , S^{-1/2} )$
 the infimum in (\ref{e Ialpha}) is not achieved and optimal strings do not exist.
For $\alpha \in ( -S^{-1/2} , S^{-1/2} )$ the optimal strings exist and we find them and corresponding optimal quasi-eigenvalues
explicitly. It occurs that optimal strings $\d M$ consist of a single atom mass placed such that
one of the equalities $\Tm_M = m$ and $ \Stm_M = S$ hold, see Theorems \ref{t
prop of optim}. In other words, optimal strings $\d M$
are extreme points of $\Ad$. Section \ref{s tools} is preparative for
the proofs of these results. The proofs are given in Section \ref{s PrTh mes}.


We use the fact that for the class of strings with finite $\Tm_M$ and $\Stm_M$ a complete solution
of the corresponding direct spectral problem was obtained by Krein and Nudelman \cite{KN89}.
While there exist a number of papers on the direct and inverse
spectral problems for quasi-eigenvalues (see e.g. \cite{CZ95,GP97,Sh07}),
it seems that certain strong additional conditions on $\d M$ and the friction coefficient are always involved.
In the author's opinion,  the study of the quasi-eigenvalue direct spectral problem for
absolutely continuous $\d M$ with densities  $M'(x)$ in $L^p$-spaces could help to understand better
the related optimization problems.


\textbf{Notation}. $\C_\pm = \{ z \in \C : \pm \im z >0 \}$, \ $\R_\pm = \{ x \in \R: \pm x >0 \}$,
 $\D_\epsilon (\zeta) := \{z \in \C : | z - \zeta | < \epsilon \}$,
$\T_\epsilon (\zeta) = \{ z \in \C : | z - \zeta | = \epsilon \}$.
For $\Omega \subset \C$, $v_0,z \in \C$, let $z\Omega +
v_0 := \{ zv + v_0 \, : \, v \in \Omega \}$.



\section{Optimal strings and quasi-eigenvalues.}
\label{s sol}

Following the settings of
\cite{KN89} with some minor changes (see also \cite[Ch.~5]{DM76}), we consider a finite or semi-infinite (Krein) string $\d M$ on $(-\infty,1]$
with a finite \emph{statical moment}   (first moment) w.r.t. the right end $x=1$, i.e., $ \Stm_M =  \int_{(-\infty,1]} (1-x) \d M < \infty $.
\emph{This class of strings is denoted by} $\S$.
Then \emph{the total mass} of the string is finite, $\Tm_M := \int_{(-\infty,1]} \d M < \infty$.
Through the standard procedure the nondecreasing function $M(x) := \int_{(-\infty,x]} dM $ can be associated with the Borel measure $\d M$.
By definition, \emph{the left end of the string} is
\begin{equation} \label{e a1}
 a_1:= \inf \{ x \ : \ M(x) > 0  \} \ \ (\geq - \infty)  .
\end{equation}
When $M (x)  =0 $ on $(-\infty,1]$, we put $a_1 =1$.
Strings with finite $a_1$ are called \emph{regular}, with $a_1 = -\infty$ singular. \emph{The
class of regular strings is denoted by} $\S_\fin$.

The quasi-eigenvalue problem for strings is given by (\ref{e dMdx eq Int}).
It was noticed in \cite{KN79,KN89} that it is convenient to include the case $a_1=-\infty$ (i.e., singular strings)
into the study of problem (\ref{e dMdx eq Int}) defining the left-hand derivative  at $-\infty$ by
$\pa_x^- y (-\infty) := \lim_{x \to -\infty} \pa_x^- y (x) $.
To define the right-hand derivative $ \pa_x^+ y (1)$, we assume that
the function $M $ is continued to $x \in (1,+\infty) $ by $ M(x):= M(1)+(x-1)$
(i.e., $\d M$ is continued by the Lebesgue measure)
and that $y$ satisfies $ \frac{\d^2}{\d M \d x} y(x) + \k^2 y(x) = 0$ in a vicinity of $x=1$.
Taking this into
account, put
\begin{equation} \label{e a2}
 a_2:= \inf \{ x \in \R \ : \ \d M(s) = \d s \text{ on } [x,+\infty) \}.
\end{equation}
Clearly, $ -\infty < a_2 \leq 1 $ and $a_1 \leq a_2$. If $a_2 =1$,
the string $\d M$ is called \emph{reduced}.

Considering the problem in the weighted Hilbert space $L^2 (-\infty,
1; \d M)$ with the norm $ \| f \|_{L^2 (\d M)} = \left(
\int_{(-\infty,1]} |f|^2 \d M \right)^{1/2}$ (i.e., assuming $y$ and
$\frac{\d^2}{\d M \d x} y$ in $L^2 (-\infty, 1; \d M)$), we call the
eigen-parameters $\k$ corresponding to nonzero solutions of (\ref{e
dMdx eq Int}) \emph{quasi-eigenvalues of} $\d M$ and denote
\emph{the set of quasi-eigenvalues} by $K (\d M)$.

It is not difficult to prove that $ K (\d M) \subset \C_+ $ for all $\d M \in \S$.
It occurs  that $K (\d M)$ is the set of zeroes of the entire function
\begin{equation*} \label{e F}
F(z) = F (z; \d M) := \vphi (1,z) - \i \pa_x^+ \vphi (1,z) \, / \, z , \ \ z \in \C,
\end{equation*}
where $\vphi (x,z) = \vphi (x, z; \d M )$ is the solution of the initial value problem
\begin{equation*} \label{e phi}
\frac{\d^2}{\d M \d x} \vphi (x,z) = - z^2 \vphi (x,z) , \ \ \ \vphi (a_1,z) = 1, \ \ \ \pa_x^- \vphi (a_1,z) = 0 .
\end{equation*}
In the case $a_1=-\infty$, it is assumed that $\vphi (-\infty,z) :=
\lim_{x\to -\infty} \vphi (x,z)$ and the existence of $\vphi $ follows
from the theory of Krein strings  (see \cite{KK68_II,KN79,KN89} for details and references).

It is obvious that all modes $y$ corresponding to
$\k \in K (\d M)$ are equal to $\vphi (\cdot, \k; \d M )$ up to a multiplication by a constant.  So \emph{the geometric multiplicity} of any quasi-eigenvalue equals 1. In the following, \emph{the multiplicity} of a quasi-eigenvalue means its \emph{algebraic multiplicity}.

 \begin{defn}[\cite{KN79,KN89}] \label{d mult}
\emph{The multiplicity} of a quasi-eigenvalue is its multiplicity as a zero of the entire function $F (\cdot) $. A quasi-eigenvalue is called \emph{simple} if its multiplicity is $1$.
 \end{defn}

For regular strings, this is classical M.V. Keldysh's definition of multiplicity for eigenvalue problems with an eigen-parameter in boundary conditions (see e.g. \cite[Sec. 1.2.2-3]{N69}).

By $K_r (\d M)$ we denote the set of quasi-eigenvalues of multiplicity $r \in \N$. Each quasi-eigenvalue has a finite multiplicity.

Let us introduce the set
$ K (\Ad ) := \bigcup\limits_{\d M \in \Ad } K (\d M ) $
of all possible quasi-eigenvalues for strings from  $\Ad$.
Recall that the admissible family $\Ad$ defined by (\ref{e wtAdmes}).
Then the function $\I$ given by (\ref{e Ialpha}) can be written in the form
\begin{equation*} 
\I (\alpha) \ = \ \inf \{ \im \kappa \ : \ \re \kappa = \alpha \text{ and } \k \in K (\Ad) \} .
\end{equation*}


\begin{thm} \label{t K mes}
$ K (\Ad ) = \i [ m^{-1}, + \infty ) \cup \Bigl( \C_+ \setminus \left[ \D_{m^{-1}} ( \i m^{-1}) \cup \D_{S^{-1/2}} (0) \right] \Bigr) . $
\end{thm}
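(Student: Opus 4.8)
The plan is to characterize $K(\Ad)$ by combining an exact description of $K(\d M)$ for a single-atom string with two monotonicity/scaling arguments that reduce the general case to the single-atom case. First I would record the elementary solution of (\ref{e dMdx eq Int}) when $\d M = \mu\, \delta_{c}$ is one atom of mass $\mu>0$ at a point $c \le 1$: here $\vphi(x,z)$ is piecewise linear, $F(z;\mu\delta_c)$ is an explicit affine-in-$\mu$, affine-in-$(1-c)$ polynomial in $z$ of degree $2$, and its unique zero $\k$ in $\C_+$ can be solved for in closed form. Writing $\Tm_M=\mu$ and $\Stm_M=\mu(1-c)$, this gives a rational parametrization of the attainable $\k$ by the two parameters $(\Tm_M,\Stm_M)$ ranging over $0<\Tm_M\le m$, $0\le\Stm_M\le S$; I expect the image to be exactly the right-hand side of the theorem, with the circle $\T_{m^{-1}}(\i m^{-1})$ coming from the pure point-mass-at-$x=1$ family ($\Stm_M=0$, $\Tm_M=\mu\in(0,m]$, giving $\k = \i/\mu$, hence the ray $\i[m^{-1},\infty)$ and the boundary circle through $0$ and $\i m^{-1}$) and the circle $\T_{S^{-1/2}}(0)$ arising as the envelope/limit when $c\to-\infty$ with $\Stm_M=S$ fixed.

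Second, for the inclusion $K(\Ad)\subseteq(\text{RHS})$, I would invoke the Krein--Nudelman direct spectral theory (cited in the excerpt) together with the general bound $K(\d M)\subset\C_+$ to get a priori constraints: I expect that from $F(z;\d M)$ one extracts, for any $\k\in K(\d M)$, inequalities of the form $|\k - \i/(2\Tm_M)| \ge$ (something) and $|\k|\ge$ (something involving $\Stm_M$), i.e. that the two excluded disks are ``forbidden regions'' for \emph{every} string with the given mass/moment bounds. Concretely, $\vphi(1,z)$ and $\pa_x^+\vphi(1,z)/z$ admit integral representations in terms of $\d M$ whose real and imaginary parts at a putative zero yield these two disk inequalities after using $\Tm_M\le m$, $\Stm_M\le S$; this is the step I expect to be the main obstacle, since it requires the right manipulation of the transfer-matrix/Stieltjes-function identities rather than a routine estimate, and one must be careful that the boundary circles are attained (closed conditions) while the union with $\i[m^{-1},\infty)$ is not redundant.

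Third, for the reverse inclusion $(\text{RHS})\subseteq K(\Ad)$, I would argue that every target point is realized, and in fact by an extreme point of $\Ad$: given $\k$ in the right-hand side, solve the explicit single-atom equation $F(\k;\mu\delta_c)=0$ for $(\mu,1-c)$, check $\mu\in(0,m]$ and $\mu(1-c)\in[0,S]$ (this is where the precise shapes of the two disks enter — points outside $\D_{m^{-1}}(\i m^{-1})$ keep $\mu\le m$, points outside $\D_{S^{-1/2}}(0)$ keep the moment $\le S$), and for the ray $\i[m^{-1},\infty)$ use the degenerate atom at $x=1$. Points that require $c=-\infty$ (the circle $\T_{S^{-1/2}}(0)$ itself, if not already covered) are obtained either directly as quasi-eigenvalues of the corresponding singular string in $\S$, using that the theory of Krein strings provides $\vphi(-\infty,z)$, or as limits $\d M^{(n)}\to\d M$; since the theorem asserts set equality with the closed circle included, I would verify the endpoint cases carefully.

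Finally I would assemble the two inclusions. The bookkeeping to watch: (a) that $\i[m^{-1},+\infty)$ is genuinely separate from the big region and not swallowed by the disk $\D_{m^{-1}}(\i m^{-1})$ (its lower endpoint $\i m^{-1}$ is on $\pa\D_{m^{-1}}(\i m^{-1})$, the rest of the ray lies above, outside that disk but possibly inside $\D_{S^{-1/2}}(0)$ when $m^{-1}<S^{-1/2}$, which is exactly why the ray must be listed explicitly); (b) that the union in the statement is taken with the \emph{open} disks removed, consistent with the single-atom equalities $\Tm_M=m$ or $\Stm_M=S$ producing boundary points; (c) that geometric multiplicity being $1$ (noted before Definition \ref{d mult}) means we never need to worry about multiplicity here, only about membership. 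The main obstacle remains deriving the two forbidden-disk inequalities in step two cleanly from the representation of $F(\cdot;\d M)$.
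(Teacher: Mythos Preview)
Your plan for the reverse inclusion $(\text{RHS})\subseteq K(\Ad)$ is correct and is exactly what the paper does: every point of the right-hand side is hit by a single-atom string $\d\Delta_{x_0,A}\in\Ad$. Two small corrections there. First, for such a string $|\k|^2 = (A(1-x_0))^{-1} = \Stm_M^{-1}$ and $2\beta/|\k|^2 = A = \Tm_M$, so the circle $\T_{S^{-1/2}}(0)$ is realized by atoms with $\Stm_M=S$ at \emph{finite} $x_0$ --- no limit $c\to-\infty$ is needed --- and the circle $\T_{m^{-1}}(\i m^{-1})$ is realized by atoms with mass $A=m$ at varying $x_0<1$, not by the $x_0=1$ family (that family gives only the ray $\i[m^{-1},\infty)$). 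Second, $F(z;\mu\delta_c)$ has two zeros in $\C_+$ when $\alpha\neq 0$, namely $\pm\alpha+\i\beta$, not a unique one.

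The gap is in the forward inclusion $K(\Ad)\subseteq(\text{RHS})$. Your plan to read off the two disk inequalities from real/imaginary parts of ``integral representations of $\vphi(1,z)$ and $\pa_x^+\vphi(1,z)/z$'' at a zero is vague and, as you suspect, does not lead anywhere cleanly for a general $\d M$. The paper's route is different and much shorter: it uses the Krein--Nudelman \emph{trace identities}
\[
\sum_j \frac{\im \k_j}{|\k_j|^2} \;=\; \Tm_M - \l,
\qquad
\Stm_M \;=\; \sum_{\re\k_j>0}\frac{1}{|\k_j|^2} \;+\; (\text{further nonnegative terms}),
\]
the second of which the paper derives by expanding the Hadamard product for $F$ to second order in $\k$ and matching against the power series of $\vphi(1,\k)$. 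Since every summand is nonnegative, any single quasi-eigenvalue $\k=\alpha+\i\beta$ with $\alpha\neq 0$ (together with its reflection $-\bar\k$) contributes $2\beta/|\k|^2\le\Tm_M$ and $1/|\k|^2\le\Stm_M$, which are precisely the two disk exclusions; for $\alpha=0$ the first identity gives $1/\beta\le\Tm_M$. This is the missing idea: the forbidden regions come from dropping all but one pair of terms in global sum formulas for $\Tm_M$ and $\Stm_M$, not from pointwise estimates on $F$.
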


We postpone the proof of this result to Section \ref{s PrTh mes}  and turn to its immediate corollary.

\begin{cor} \label{c I mes}
If $S \leq m^2 /4$, then
$
 \I (\alpha) = \left\{ \begin{array}{ll}
m^{-1}, & \text{ for } \alpha = 0 \\
\sqrt{S^{-1} - \alpha^2} , & \text{ for } 0< |\alpha| \leq S^{-1/2}  \\
0, & \text{ for } \alpha \geq S^{-1/2}
\end{array}
\right. .
$

If $S > m^2 /4$, then
$
 \I (\alpha) = \left\{ \begin{array}{ll}
m^{-1}, & \text{ for } \alpha = 0 \\
m^{-1} + \sqrt{m^{-2} - \alpha^2},  & \text{ for } 0<|\alpha|< \sqrt{S^{-1} - m^2 S^{-2} / 4}   \\
\sqrt{S^{-1} - \alpha^2} , & \text{ for } \sqrt{S^{-1} - m^2 S^{-2} / 4} \leq |\alpha| \leq S^{-1/2}  \\
0, & \text{ for } \alpha \geq S^{-1/2}
\end{array}
\right. .
$
\end{cor}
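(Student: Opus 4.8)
The plan is to read off Corollary \ref{c I mes} directly from the explicit description of $K(\Ad)$ provided by Theorem \ref{t K mes}. As noted just before Theorem \ref{t K mes}, $\I(\alpha)=\inf\{\im\k : \k\in K(\Ad),\ \re\k=\alpha\}$, so everything reduces to locating the lowest point of the intersection of the vertical line $\{\alpha+\i\beta:\beta>0\}$ with the set $\i[m^{-1},+\infty)\cup\bigl(\C_+\setminus[\D_{m^{-1}}(\i m^{-1})\cup\D_{S^{-1/2}}(0)]\bigr)$. First I would record the evident symmetry: the ray $\i[m^{-1},+\infty)$ and each disk $\D_{m^{-1}}(\i m^{-1})$, $\D_{S^{-1/2}}(0)$ is invariant under the reflection $\k\mapsto-\overline{\k}$, hence so is $K(\Ad)$; therefore $\I(-\alpha)=\I(\alpha)$ and it suffices to treat $\alpha\ge 0$.

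Next I would dispatch the two degenerate cases. For $\alpha=0$ the line is the positive imaginary half-axis; it contains the whole ray $\i[m^{-1},+\infty)\subset K(\Ad)$, while every $\i\beta$ with $0<\beta<m^{-1}$ lies in $\D_{m^{-1}}(\i m^{-1})$, so $\I(0)=m^{-1}$. For $\alpha\ge S^{-1/2}$ the whole line lies outside $\overline{\D_{S^{-1/2}}(0)}$, and since $\alpha^2+\beta^2-2m^{-1}\beta\to\alpha^2>0$ as $\beta\to0+$, for all small $\beta>0$ the point $\alpha+\i\beta$ is outside both disks and hence in $K(\Ad)$; letting $\beta\to0+$ gives $\I(\alpha)=0$.

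The substantive case is $0<\alpha<S^{-1/2}$, where the ray is irrelevant. Here I would rewrite disk membership as conditions on $\beta$: $\alpha+\i\beta\in\D_{S^{-1/2}}(0)$ iff $\beta<\sqrt{S^{-1}-\alpha^2}$, and $\alpha+\i\beta\in\D_{m^{-1}}(\i m^{-1})$ iff $\beta_-(\alpha)<\beta<\beta_+(\alpha)$, where $\beta_\pm(\alpha)=m^{-1}\pm\sqrt{m^{-2}-\alpha^2}$ (an empty interval when $\alpha\ge m^{-1}$). The decisive computation is whether the point $\alpha+\i\sqrt{S^{-1}-\alpha^2}$ where the line leaves $\D_{S^{-1/2}}(0)$ still lies in $\D_{m^{-1}}(\i m^{-1})$; substituting $\alpha^2+\beta^2=S^{-1}$ into the second condition, this happens precisely when $\sqrt{S^{-1}-\alpha^2}>m/(2S)$, i.e.\ when $\alpha^2<S^{-1}-m^2S^{-2}/4$. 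If not, then $\alpha+\i\sqrt{S^{-1}-\alpha^2}$ lies on $\T_{S^{-1/2}}(0)$ and outside $\D_{m^{-1}}(\i m^{-1})$, hence in $K(\Ad)$, while everything strictly below it on the line is in $\D_{S^{-1/2}}(0)$; so $\I(\alpha)=\sqrt{S^{-1}-\alpha^2}$. If so, then $\beta_-(\alpha)<\sqrt{S^{-1}-\alpha^2}<\beta_+(\alpha)$, so the entire segment $0<\beta<\beta_+(\alpha)$ of the line is swallowed by the union of the two disks, while $\alpha+\i\beta_+(\alpha)$ lies on $\T_{m^{-1}}(\i m^{-1})$ and, since $\beta_+(\alpha)^2>S^{-1}-\alpha^2$, outside $\overline{\D_{S^{-1/2}}(0)}$, hence in $K(\Ad)$; so $\I(\alpha)=\beta_+(\alpha)=m^{-1}+\sqrt{m^{-2}-\alpha^2}$. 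Finally I would observe that $S^{-1}-m^2S^{-2}/4\le 0$ exactly when $S\le m^2/4$: thus for $S\le m^2/4$ only the first alternative occurs on $(0,S^{-1/2})$, whereas for $S>m^2/4$ the threshold $\alpha_*:=\sqrt{S^{-1}-m^2S^{-2}/4}$ satisfies $0<\alpha_*<S^{-1/2}$, with the second alternative on $(0,\alpha_*)$ and the first on $[\alpha_*,S^{-1/2})$; assembling these with the cases $\alpha=0$ and $\alpha\ge S^{-1/2}$ reproduces the two displayed formulas (and shows the infimum is attained whenever $|\alpha|<S^{-1/2}$).

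I do not expect a genuine obstacle: the argument is elementary plane geometry of two circles and a ray. The only points demanding a little care are the bookkeeping of which boundary circle carries the minimizer — this is precisely where the dichotomy $S\le m^2/4$ versus $S>m^2/4$ (equivalently $S^{-1/2}\ge 2m^{-1}$ versus $S^{-1/2}<2m^{-1}$) enters, deciding whether $\alpha_*$ collapses to $0$ or lies strictly inside $(0,S^{-1/2})$ — together with the auxiliary inequality $\alpha_*^2\le m^{-2}$, equivalent to $(2S-m^2)^2\ge 0$, which keeps $\beta_\pm(\alpha)$ real throughout the relevant range.
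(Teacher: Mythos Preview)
Your argument is correct and is exactly what the paper has in mind: it states Corollary~\ref{c I mes} as an ``immediate corollary'' of Theorem~\ref{t K mes} without giving a separate proof, and your plane-geometry computation (intersecting the vertical line $\re\k=\alpha$ with the explicit set $K(\Ad)$ and comparing the two boundary circles) is precisely the intended derivation. The care you take with the threshold $\alpha_*$ and the auxiliary inequality $\alpha_*^2\le m^{-2}$ fills in the details the paper leaves implicit.
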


By $\d \Delta_{x_0, A}$ we denote the string consisting of an atom mass $A>0$ placed at a point $x_0 \leq 1$, i.e., writing with  Dirac's $\delta$-function,
$ \d \Delta_{x_0, A} = A \delta (x-x_0) \d x $.

The following proposition can be obtained by direct calculations.

\begin{prop} \label{p Delt}
Let $x_0 \leq 1 $ and $A > 0$.
Then the sets of quasi-eigenvalues of the strings $\d \Delta_{x_0, A}$ have the following description (taking multiplicities into account):
\item[(i)] For $x_0 < 1 $, $ K (\d \Delta_{x_0, A}) =
\left\{ \ \i \frac {1}{2(1 -x_0 )} \pm \sqrt{ \frac{1}{A (1 -x_0 )} -
\frac {1}{4 (1-x_0)^2} }  \ \right\}.
$
\item[(ii)] In the case $x_0 = 1 $, $K (\d \Delta_{1, A}) = \{ \i A^{-1} \}$.
\end{prop}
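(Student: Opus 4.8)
The plan is to compute the entire function $F(\cdot\,;\d\Delta_{x_0,A})$ from Section~\ref{s sol} explicitly. Since $\d\Delta_{x_0,A}$ is carried by the single point $x_0$, the solution $\vphi$ of the associated initial value problem is piecewise affine, so $F$ turns out to be a polynomial in $z$ of degree $2$ when $x_0<1$ and of degree $1$ when $x_0=1$. Its zeros, counted with multiplicity, are then given by the quadratic formula (resp.\ by solving a linear equation), and the degree guarantees there are no others; this immediately yields both parts of the proposition.

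First I would record the data of the string and the shape of $\vphi(\cdot,z)=\vphi(\cdot,z;\d\Delta_{x_0,A})$. By (\ref{e a1}) the left end is $a_1=x_0$ when $x_0<1$ and $a_1=1$ when $x_0=1$, and the initial conditions are $\vphi(a_1,z)=1$, $\pa_x^-\vphi(a_1,z)=0$. On any interval carrying no mass the equation $\frac{\d^2}{\d M\,\d x}\vphi=-z^2\vphi$ forces $\pa_x^-\vphi$ to be constant, so $\vphi$ is affine there; across an atom of mass $A$ sitting at a point $c$ the derivative jumps by $\pa_x^+\vphi(c,z)-\pa_x^-\vphi(c,z)=-z^2A\,\vphi(c,z)$.

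For $x_0<1$ this gives $\vphi\equiv1$ on $(-\infty,x_0]$, hence $\pa_x^+\vphi(x_0,z)=-z^2A$, and affineness on $(x_0,1]$ yields $\vphi(1,z)=1-z^2A(1-x_0)$ together with $\pa_x^+\vphi(1,z)=-z^2A$ --- the latter because there is no atom at $x=1$, so the prescribed continuation of $\d M$ by the Lebesgue measure beyond $x=1$ produces no jump of the derivative at $x=1$. Consequently
\[ F(z)=\vphi(1,z)-\i\,\pa_x^+\vphi(1,z)/z=1-A(1-x_0)z^2+\i Az . \]
Solving the quadratic $F(z)=0$ and simplifying the radical $\sqrt{4A(1-x_0)-A^2}\,\big/\,\bigl(2A(1-x_0)\bigr)$ gives
\[ z=\frac{\i}{2(1-x_0)}\pm\sqrt{\frac{1}{A(1-x_0)}-\frac{1}{4(1-x_0)^2}} , \]
which is the set in (i); the two roots merge into a single quasi-eigenvalue of multiplicity $2$ precisely when $A=4(1-x_0)$, in agreement with ``taking multiplicities into account''. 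For $x_0=1$ the initial condition is imposed at $a_1=1$, so $\vphi(1,z)=1$, whereas now the atom is located at $x=1$ and therefore $\pa_x^+\vphi(1,z)=\pa_x^-\vphi(1,z)-z^2A\,\vphi(1,z)=-z^2A$; thus $F(z)=1+\i Az$, a polynomial of degree $1$ with the single simple root $z=\i A^{-1}$, which is (ii).

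The only genuinely delicate point is the bookkeeping of the one-sided derivatives $\pa_x^\pm$ in the Krein--Feller setting near the endpoint $x=1$: in case (i) one must verify that the Lebesgue continuation of $\d M$ past $x=1$ does not alter $\pa_x^+\vphi(1,z)$, and in case (ii) that the initial condition sits at $a_1=1$ (so that $\vphi(1,z)=1$) even though both the atom and the dissipative boundary condition live at $x=1$. Once these conventions are pinned down, everything else is the ``direct calculation'' announced before the statement.
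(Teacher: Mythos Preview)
Your proposal is correct and is precisely the ``direct calculation'' the paper announces in lieu of a proof: computing $\vphi(\cdot,z)$ piecewise, reading off $F(z)$ as a quadratic (resp.\ linear) polynomial, and locating its zeros. Your handling of the one-sided derivatives at the atom and at $x=1$ (including the case $x_0=1$, where the atom and the boundary condition coincide) matches the conventions set out in Section~\ref{s sol}, so nothing is missing.
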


\begin{rem} \label{r ex mult}
In the case $4 (1-x_0) =A$, the proposition means that $ \frac {\i}{2(1 -x_0 )}$ is a quasi-eigenvalue
of multiplicity 2.
\end{rem}

In the case $\I (\alpha )=0$, i.e., when $\alpha \in \R \setminus ( -S^{-1/2} , S^{-1/2} )$,  the infimum in (\ref{e Ialpha}) is not achieved.
An optimizing sequence of strings those quasi-eigenvalues tend to $\alpha \in \R \setminus ( -S^{-1/2} , S^{-1/2} )$ is provided by Proposition \ref{p Delt}.
Indeed, the strings $\d \Delta_{1-(2\beta)^{-1}, \, 2\beta (\alpha^2 +\beta^2)^{-1}}$ belong to $\Ad$ for $\beta >0$ small enough
and their sets of quasi-eigenvalues are $\{ \pm \alpha + \i \beta \}$.

On the other side,  $K (\Ad )$ is a closed set in the relative topology of $\C_+$. So for $\alpha \in (- S^{-1/2},S^{-1/2})$ minimizers exist.

\begin{thm} \label{t prop of optim}
Let $\alpha \in (- S^{-1/2},S^{-1/2})$ and let $\k^{[\alpha]} = \alpha + \i \I (\alpha)$
be the corresponding optimal quasi-eigenvalue.
Then for the frequency $\alpha$ there exists a unique optimal string $\d M_\alpha$ (i.e.,  $\d M_\alpha \in \Ad $ and
$\k^{[\alpha]} \in K(\d M_\alpha)$). The string  $\d M_\alpha$ is of the form $\d \Delta_{x_0,A}$ and satisfies at least one of the equalities $\Tm_{M} = m$,
$\Stm_{M} = S$.

More precisely, $\d M_0 = \d \Delta_{1,m} $.

For $0<\alpha < S^{-1/2}$, $\d M_\alpha = \d M_{-\alpha} = \d \Delta_{x_0,A}$ with $x_0$ and $A$ given by the following equalities:
\begin{eqnarray*}
\text{when} & S \leq m^2 /4 ,  &   x_0 = 1 - \frac{1}{2 (S^{-1} - \alpha^2)^{1/2}}, \ \ A = 2S\sqrt{S^{-1} - \alpha^2} ; \\
\text{when}  & S > m^2 /4,  &   (x_0,  A) =
\left\{ \begin{array}{rr}
\left( 1- \frac{m}{2+2(1-\alpha^2m^2)^{1/2}} \, , \,  m \right) ,  &  0<\alpha< \sqrt{\frac{1}{S} - \frac{m^2}{4S^2}}   \\
\left( 1 - \frac{1}{2 (S^{-1} - \alpha^2)^{1/2}} \, , \,
2S\sqrt{S^{-1} - \alpha^2} \right) , & \sqrt{\frac{1}{S} -
\frac{m^2}{4 S^2}} \leq \alpha \leq S^{-1/2}
\end{array} \right.
\end{eqnarray*}
\end{thm}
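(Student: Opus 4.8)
The plan is to derive Theorem~\ref{t prop of optim} as a consequence of the explicit description of $K(\Ad)$ in Theorem~\ref{t K mes} together with the explicit quasi-eigenvalue formulas in Proposition~\ref{p Delt}. The argument splits into three parts: (a) identifying the optimal point $\k^{[\alpha]}$ geometrically as a boundary point of $K(\Ad)$; (b) showing any optimal string must be a single atom; (c) pinning down the atom's location and mass.

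First I would make precise the geometry. By Theorem~\ref{t K mes}, the set $K(\Ad)\cap\{\re\k=\alpha\}$ for $\alpha\in(-S^{-1/2},S^{-1/2})$, $\alpha\neq 0$, is the intersection of the vertical line with the complement (in $\C_+$) of the two open disks $\D_{m^{-1}}(\i m^{-1})$ and $\D_{S^{-1/2}}(0)$. Minimizing $\im\k$ on this slice is elementary plane geometry: the infimum is attained, and $\I(\alpha)$ is exactly the value recorded in Corollary~\ref{c I mes}, with the optimal $\k^{[\alpha]}$ sitting on $\T_{S^{-1/2}}(0)$ in the case $|\alpha|\ge\sqrt{S^{-1}-m^2S^{-2}/4}$ (or $S\le m^2/4$), and on $\T_{m^{-1}}(\i m^{-1})$ in the remaining subcase; for $\alpha=0$ the optimum is $\i m^{-1}$, the topmost point of the first circle. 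So $\k^{[\alpha]}$ always lies on one of the two bounding circles, and on $\T_{S^{-1/2}}(0)$ precisely means $|\k^{[\alpha]}|=S^{-1/2}$, i.e.\ $\alpha^2+\I(\alpha)^2=S^{-1}$, while on $\T_{m^{-1}}(\i m^{-1})$ means $\I(\alpha)=m^{-1}+\sqrt{m^{-2}-\alpha^2}$.

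Next, existence and the atomic form. Existence of a minimizing $\d M_\alpha\in\Ad$ follows since $K(\Ad)$ is relatively closed in $\C_+$ (already asserted in the excerpt) and the minimizing slice is compact. To see the minimizer is $\d\Delta_{x_0,A}$, I would run Proposition~\ref{p Delt} in reverse: given the target $\k^{[\alpha]}$ with known $\re$ and $\im$, solve the two equations in part~(i) of the Proposition, $\im\k=\frac{1}{2(1-x_0)}$ and $(\re\k)^2=\frac{1}{A(1-x_0)}-\frac{1}{4(1-x_0)^2}$, for $(x_0,A)$; this gives $1-x_0=\frac{1}{2\im\k}$ and then $A=\frac{2\im\k}{(\re\k)^2+(\im\k)^2}$ (for $\alpha=0$ one uses part~(ii) instead, giving $x_0=1$, $A=m$). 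Plugging in the two circle-membership relations yields exactly the formulas in the statement: on $\T_{S^{-1/2}}(0)$ one gets $A=2S\im\k=2S\sqrt{S^{-1}-\alpha^2}$ and $x_0=1-\tfrac12(S^{-1}-\alpha^2)^{-1/2}$; on $\T_{m^{-1}}(\i m^{-1})$ one gets $x_0=1-\tfrac{m}{2+2(1-\alpha^2m^2)^{1/2}}$ and $A=m$. One then checks these strings lie in $\Ad$: $\Tm=A$ and $\Stm=A(1-x_0)$, and the circle relations translate into $\Stm_{M_\alpha}=S$ in the first case and $\Tm_{M_\alpha}=m$ in the second, with the complementary inequality ($\Tm\le m$, resp.\ $\Stm\le S$) verified directly on the relevant $\alpha$-range — this is where the threshold $\alpha=\sqrt{S^{-1}-m^2S^{-2}/4}$ enters as the value where both equalities hold simultaneously.

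The main obstacle is \textbf{uniqueness}: showing no other string in $\Ad$ (in particular no non-atomic one) has $\k^{[\alpha]}$ as a quasi-eigenvalue. I expect this to require the finer structure developed in Section~\ref{s tools} rather than Theorem~\ref{t K mes} alone, since the latter only controls the union $K(\Ad)$ as a set. The natural route is a boundary/extremality argument: $\k^{[\alpha]}$ lies on the boundary of $K(\Ad)$, so a string realizing it must be ``extremal'' in a suitable sense; one then argues via the monotonicity of $K(\d M)$ under the natural partial order on strings (decreasing mass or moment pushes quasi-eigenvalues outward from the origin / the circle $\T_{m^{-1}}(\i m^{-1})$), forcing the optimal string to be an extreme point of $\Ad$, and extreme points of $\{\Tm\le m,\ \Stm\le S\}$ within $\S_\fin$ are single atoms. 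The remaining task is then to rule out the two-atom and continuous competitors by showing their quasi-eigenvalue sets stay strictly inside the admissible region unless the string degenerates to the claimed single atom; I would handle this using the representation of $F(z;\d M)$ and the analytic dependence tools from Section~\ref{s tools}, together with the fact (Remark~\ref{r ex mult}) that the boundary case $4(1-x_0)=A$ gives a double quasi-eigenvalue, which identifies the relevant tangency configurations.
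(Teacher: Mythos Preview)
Your parts (a)--(c) are correct and match the paper's approach: the optimal point $\k^{[\alpha]}$ is located on one of the two bounding circles via Theorem~\ref{t K mes} and Corollary~\ref{c I mes}, and Proposition~\ref{p Delt} produces the explicit atom realizing it; the membership and constraint checks you sketch are exactly what is needed.

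The genuine gap is in your uniqueness argument. You correctly sense that Theorem~\ref{t K mes} alone is not enough, but the route you propose --- a monotonicity principle for $K(\d M)$ under a partial order on strings, forcing the optimizer to be an extreme point of $\Ad$, and then asserting that extreme points of $\Ad$ are single atoms --- is speculative at each step. No such monotonicity is established in the paper, and the claim that extreme points of $\{\Tm\le m,\ \Stm\le S\}$ in $\S_\fin$ are single atoms is not obvious (with two linear constraints one would a priori expect two-point supports as well). Indeed, the paper runs this implication in the \emph{opposite} direction: it first proves the optimal string is a single atom, and only afterwards notes (Remark after the theorem) that this makes it an extreme point.

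The paper's uniqueness mechanism is much more direct and uses the trace identities of Section~\ref{s tools} rather than any extremality argument. From \eqref{e sum im} and Proposition~\ref{p Stm=} one gets, for \emph{any} $\d M$ with $\k=\alpha+\i\beta\in K(\d M)$ and $\alpha\neq 0$,
\[
\frac{2\beta}{\alpha^2+\beta^2}\ \le\ \Tm_M, \qquad \frac{1}{\alpha^2+\beta^2}\ \le\ \Stm_M,
\]
i.e.\ $\k\notin \D_{\Tm_M^{-1}}(\i\,\Tm_M^{-1})\cup \D_{\Stm_M^{-1/2}}(0)$ (Lemma~\ref{l ab in Kr a>0}(i)). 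Since $\Tm_M\le m$ and $\Stm_M\le S$, the string's own forbidden disks contain the fixed disks $\D_{m^{-1}}(\i m^{-1})$ and $\D_{S^{-1/2}}(0)$. But $\k^{[\alpha]}$ lies on one of the fixed bounding circles, hence inside or on the string's own circle; part~(i) rules out ``inside'', so $\k^{[\alpha]}$ lies on the string's circle and the corresponding constraint is saturated ($\Tm_M=m$ or $\Stm_M=S$). The equality case in the trace identities forces $K(\d M)=\{\pm\alpha+\i\beta\}$ and $\l=0$, and then the uniqueness clause of Theorem~\ref{t KN} pins down $\d M$ as the single atom given in Lemma~\ref{l ab in Kr a>0}(ii)--(iii). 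The case $\alpha=0$ is handled the same way via Lemma~\ref{l ab in Kr a=0}. This replaces your entire third paragraph with a two-line argument once Lemmas~\ref{l ab in Kr a>0}--\ref{l ab in Kr a=0} are in hand.
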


\begin{rem}
The family $\Ad $ is a convex set in the space of signed Borel
measures on $(-\infty,1]$. Theorem \ref{t prop of optim} immediately
implies that the strings $\d M_\alpha$ are extreme points of
$\Ad$.
\end{rem}

Denote by
$K^{\mathrm{mult}} (\Ad):= \bigcup \, \{ K_r (\d M) \, : \,  r\ge 2 \text{ and } \d M \in \Ad  \}$
the set of all possible non-simple quasi-eigenvalues
 for strings from $\Ad$.  Using $\k^{[\alpha]}$ and $\d M_\alpha$ of
Theorem \ref{t prop of optim}, we state the following.

\begin{thm} \label{t sep mult}
Let $\alpha \in (- S^{-1/2},S^{-1/2})$. Then the optimal quasi-eigenvalue $\k^{[\alpha]}$ is a simple
quasi-eigenvalue of the optimal string $\d M_\alpha$.
Moreover, the distance from $\k^{[\alpha]}$ to $K^{\mathrm{mult}} (\Ad)$ is positive.
\end{thm}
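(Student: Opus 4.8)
The plan is to establish the two assertions of Theorem~\ref{t sep mult} in turn: the simplicity of $\k^{[\alpha]}$ by a direct computation with the explicit optimal strings, and the positivity of the distance by a compactness argument combined with the uniqueness statement of Theorem~\ref{t prop of optim}. For the first assertion, recall that by Theorem~\ref{t prop of optim} the optimal string $\d M_\alpha$ equals $\d \Delta_{x_0,A}$ for explicit $x_0 \le 1$ and $A>0$. If $\alpha = 0$, then $\d M_0 = \d \Delta_{1,m}$ and, computing as in the proof of Proposition~\ref{p Delt}, $F(z;\d \Delta_{1,m}) = 1 + \i m z$; this entire function has degree one, so its only zero $\i m^{-1} = \k^{[0]}$ is simple. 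If $\alpha \neq 0$, then in each of the three cases of Theorem~\ref{t prop of optim} one has $x_0 < 1$, and by Proposition~\ref{p Delt}(i) together with Remark~\ref{r ex mult} the quasi-eigenvalue $\k^{[\alpha]}$ fails to be simple precisely when $A = 4(1 - x_0)$. Substituting the explicit formulae for $x_0$ and $A$, this equality reduces — according to the case — either to $S(S^{-1} - \alpha^2) = 1$ or to $(1 - \alpha^2 m^2)^{1/2} = 1$; both force $\alpha = 0$, which is excluded. Hence $\k^{[\alpha]}$ is a simple quasi-eigenvalue of $\d M_\alpha$ for every $\alpha \in (-S^{-1/2}, S^{-1/2})$.

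For the positivity of the distance, suppose to the contrary that there are $\k_n \in K^{\mathrm{mult}} (\Ad)$ with $\k_n \to \k^{[\alpha]}$, and pick $\d M^{(n)} \in \Ad$ for which $\k_n$ is a zero of $F(\cdot;\d M^{(n)})$ of multiplicity $\geq 2$. The statical-moment bound yields uniform tightness, $\d M^{(n)}\bigl((-\infty, 1-R)\bigr) \leq \tfrac1R \int_{(-\infty,1-R)} (1-x)\, \d M^{(n)} \leq S/R$, so together with $\Tm_{M^{(n)}} \leq m$ we may pass to a subsequence converging weak-$*$ on $(-\infty,1]$ to a finite measure $\d M^\infty$ with $\Tm_{M^\infty} \leq m$ and (after truncating the weight $1-x$ and using tightness) $\Stm_{M^\infty} \leq S$. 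By the continuous dependence of the characteristic function on the measure (the tools of Section~\ref{s tools}, cf.\ also \cite{KN89}), $F(\cdot;\d M^{(n)}) \to F(\cdot;\d M^\infty)$ uniformly on compact subsets of $\C$; since $F(0;\d M) = 1$ for every string, $F(\cdot;\d M^\infty) \not\equiv 0$, and Hurwitz's theorem gives that $\k^{[\alpha]}$ is a zero of $F(\cdot;\d M^\infty)$ of multiplicity $\geq 2$. In particular $F(\cdot;\d M^\infty) \not\equiv 1$, so $\d M^\infty \neq 0$, and hence $\d M^\infty$ lies in the enlarged admissible family $\{\d M \in \S : 0 < \Tm_M \leq m, \ 0 \leq \Stm_M \leq S\}$. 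Since $\k^{[\alpha]} = \alpha + \i\, \I(\alpha)$ realizes the infimum in~(\ref{e Ialpha}), the string $\d M^\infty$ is an optimal string for the frequency $\alpha$; by the uniqueness part of Theorem~\ref{t prop of optim} (valid on the enlarged family by the same compactness argument that underlies Theorem~\ref{t K mes}) we get $\d M^\infty = \d M_\alpha$ — contradicting the simplicity just established.

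The delicate point is the compactness input of the second part: one needs the weak-$*$ sequential compactness of the enlarged admissible family, the lower semicontinuity of $\Tm$ and $\Stm$ on it, the continuity of $\d M \mapsto F(\cdot;\d M)$ uniform on compact subsets of $\C$, and — because a weak-$*$ limit of regular strings can be singular — the fact that the optimization conclusion of Theorem~\ref{t prop of optim}, including uniqueness and regularity of the minimizer, persists on the enlarged family. These are precisely the facts prepared in Sections~\ref{s tools} and~\ref{s PrTh mes} (and available from \cite{KN89}); granted them, the argument above is routine.
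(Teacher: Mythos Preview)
Your argument for simplicity (the first paragraph) is correct, though the paper never computes $F(z;\d\Delta_{1,m})$ directly; instead it gets simplicity as a by-product of the second assertion.

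For the second assertion your route diverges sharply from the paper. The paper's proof is a two-line application of Lemma~\ref{l ab in Kr a>0}(iv) and Lemma~\ref{l ab in Kr a=0}(iii): these give the explicit containment
\[
K^{\mathrm{mult}}(\Ad)\setminus \i\R \ \subset\ \C_+\setminus\bigl[\D_{2m^{-1}}(\i\,2m^{-1})\cup\D_{\sqrt{2/S}}(0)\bigr],
\qquad
K^{\mathrm{mult}}(\Ad)\cap \i\R\ \subset\ \i[2m^{-1},\infty),
\]
and since every $\k^{[\alpha]}$ lies strictly inside one of the smaller discs $\D_{m^{-1}}(\i m^{-1})$, $\D_{S^{-1/2}}(0)$ (or equals $\i m^{-1}$), the positive distance and the simplicity are immediate. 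No compactness, no limits of measures, no Hurwitz.

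Your compactness/Hurwitz argument is plausible in outline, but you misidentify what the paper provides. Sections~\ref{s tools} and~\ref{s PrTh mes} contain no weak-$*$ compactness statement, no continuity of $\d M\mapsto F(\cdot;\d M)$, and no lower-semicontinuity results; they consist of the Krein--Nudelman parametrization, the trace formulae (\ref{e sum im}) and (\ref{e Stm=}), and Lemmas~\ref{l ab in Kr a>0}--\ref{l ab in Kr a=0}. Your claim that Theorem~\ref{t K mes} rests on ``the same compactness argument'' is simply wrong: its proof is the explicit inclusion from Lemma~\ref{l ab in Kr a>0}(i)/\ref{l ab in Kr a=0}(i) together with Proposition~\ref{p Delt}. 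So your second paragraph imports machinery that is neither in this paper nor needed for the result; if you want to keep your approach, you must actually establish (or cite precisely) the continuity of $F(\cdot;\d M)$ under weak-$*$ convergence on the enlarged family---that is the genuine gap in what you wrote. The paper's quantitative route via Lemma~\ref{l ab in Kr a>0}(iv) is both shorter and gives an explicit lower bound on the distance.
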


Theorems \ref{t prop of optim} and \ref{t sep mult} will be proved in Section \ref{s PrTh mes}.

\begin{rem}
All the results of this subsection are valid if singular strings are included into the admissible family,
i.e., if one replaces $\S_\fin$ with $\S$ in (\ref{e wtAdmes}).
The proofs do not require changes.
\end{rem}

\section{Tools. \label{s tools}}


 Denote
$ \l := 1 - a_2 , $
  where $a_2$ is defined by (\ref{e a2}). So $\l$ is \emph{the length of a maximal interval $(x,1]$ carrying Lebesgue's measure}, $\l = 0$
  exactly when the string is reduced.
  It is easy to see that
 $K (\d M (x)) = K (\d M (x-\l))$ (see e.g. \cite[Sec. 3.1]{KN89});
  that is, if we delete the interval $(a_2, 1]$  from a non-reduced string and shift the new right endpoint to $1$,
  the obtained reduced string has the same quasi-eigenvalues.

  All possible sets of quasi-eigenvalues for strings of the class $\S$ were characterized in \cite{KN79,KN89}.

 \begin{thm}[Theorem 3.1 of \cite{KN89}] \label{t KN}
 Let $ \{\k_j \}$ be an (empty, finite, or infinite) sequence of complex numbers, some of those may coincide. Then
 the set $\{\k_j \}$ is the set of quasi-eigenvalues (taking multiplicities into account) of a certain string  $\d M \in \S$
 if and only if all the following conditions are fulfilled:
 \item[1)] The set $ \{\k_j \} $ is symmetric w.r.t.
 the imaginary axis $\i \R$, moreover, the multiplicities of symmetric numbers are the same.
 \item[2)] $\im \k_j > 0$ for all $j$, and $\sum_j |\im (1/\k_j)| < \infty$.
 \item[3)] $\sum_j |\k_j|^{-2} < \infty$.

If these conditions are fulfilled, there exists a unique reduced string $\d M$ with the set of quasi-eigenvalues
$\{\k_j \}$.
\end{thm}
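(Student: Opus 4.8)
The plan is to prove the characterization in two directions, following Krein's spectral theory of strings, and then to read off uniqueness of the reduced representative from the uniqueness built into Krein's string correspondence.

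\textbf{Necessity.} First I would fix $\d M \in \S$ and work with the characteristic function $F(\cdot) = F(\cdot;\d M)$. Because the integral equation defining $\vphi(x,z)$ has a real kernel and depends on $z$ only through $z^2$, the maps $z \mapsto \vphi(x,z)$ and $z \mapsto \pa_x^+\vphi(1,z)$ are even and satisfy $\overline{\vphi(x,z)} = \vphi(x,\bar z)$, $\overline{\pa_x^+\vphi(1,z)} = \pa_x^+\vphi(1,\bar z)$. Hence $\overline{F(z)} = \vphi(1,\bar z) + \i\,\pa_x^+\vphi(1,\bar z)/\bar z = F(-\bar z)$, so $z_0 \in K(\d M) \iff -\bar z_0 \in K(\d M)$ with equal multiplicities --- this is 1). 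The inclusion $K(\d M) \subset \C_+$ (dissipativity of the boundary value problem, recalled above) gives $\im\k_j > 0$. For the two summability requirements I would invoke Krein's direct theory: when $\Tm_M < \infty$ the even entire function $\vphi(1,\cdot)$ has only real zeros $\{\pm\sqrt{\lambda_k}\}$ (the Neumann spectrum of the string), is of genus $0$ in $z^2$ with $\sum \lambda_k^{-1} < \infty$, and $\pa_x^+\vphi(1,\cdot)/z$ has real zeros interlacing these; since for large index the zeros $\k_j$ of $F = \vphi(1,\cdot) - \i\,\pa_x^+\vphi(1,\cdot)/(\cdot)$ are small perturbations of these real sequences, a Hadamard factorization together with Jensen's formula yields $\sum|\k_j|^{-2} < \infty$ (condition 3)), while finiteness of $\Stm_M$ controls the first spectral invariant, i.e.\ a sum of the form $\sum \im\k_j/|\k_j|^2$, giving $\sum|\im(1/\k_j)| < \infty$ (condition 2)).

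\textbf{Sufficiency.} Conversely, assume 1)--3). Using 3) I would form the genus-one Weierstrass product $F(z) := \prod_j (1 - z/\k_j)\,e^{z/\k_j}$, which converges locally uniformly. Grouping each $\k_j$ with its mirror $-\bar\k_j$ one checks that the convergence factors collapse, by 2), into a single exponential $e^{\i c z}$ with $c \in \R$, and that $\overline{F(z)} = F(-\bar z)$; consequently $F = A - \i B$ with $A$ real entire and even, $B$ real entire and odd, $A(0)=1$. Since all zeros of $F$ lie in $\C_+$, the function $F$ is of Hermite--Biehler type, hence $B/A$ is a Herglotz function in the $z$-plane, equivalently $A$ and $B$ have only real, interlacing zeros. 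Writing the even functions $A(z)$ and $zB(z)$ as functions $\widetilde A(\mu)$, $\widetilde{zB}(\mu)$ of $\mu = z^2$, the ratio $\mathbf q(\mu) := \widetilde A(\mu)/\widetilde{zB}(\mu)$ is then a Stieltjes function, holomorphic on $\C \setminus [0,\infty)$ and nonnegative on $(-\infty,0)$; conditions 2)--3) are exactly what is needed to place $\mathbf q$ in the class realized by a string of $\S$ (a mass point of $\mathbf q$ at $0$ of positive weight $\Tm_M^{-1}$, i.e.\ finite positive total mass, plus the moment bound corresponding to $\Stm_M < \infty$). Krein's inverse spectral theorem then produces a unique reduced string $\d M \in \S$ with principal coefficient $\mathbf q$; unwinding the definitions, its characteristic function is $F$ up to a nonvanishing entire factor, so $K(\d M) = \{\k_j\}$ with multiplicities. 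Uniqueness of the reduced representative is precisely the uniqueness clause in Krein's correspondence, once one notes (as recalled in Section~\ref{s tools}) that deleting a terminal Lebesgue interval does not change $K(\d M)$.

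\textbf{Main obstacle.} The delicate step is the sufficiency direction: translating the zero-distribution hypotheses 1)--3) into the precise Stieltjes/Nevanlinna positivity \emph{and} the normalization (behaviour of $\mathbf q$ at $0$ and at $-\infty$) that Krein's inverse theorem demands, and handling cleanly the point $z=0$ (where $B$ vanishes) together with the reduced/non-reduced dichotomy. The necessity direction is comparatively routine once the reflection symmetry and the genus/type estimates for $\vphi(1,\cdot)$ coming from Krein's direct theory are in hand.
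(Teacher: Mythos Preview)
The paper does not prove this theorem at all: it is quoted verbatim as Theorem~3.1 of \cite{KN89} and used as a black-box tool (together with formula~(\ref{e sum im}) and the product representation of $F$) in Section~\ref{s tools}. So there is no ``paper's own proof'' to compare your attempt against; the author simply cites Krein--Nudelman and moves on.

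As for your sketch itself, the overall architecture is the right one and matches what Krein and Nudelman actually do: symmetry $\overline{F(z)}=F(-\bar z)$ from realness of the kernel, $K(\d M)\subset\C_+$ from dissipativity, Hermite--Biehler structure of $F$, and then Krein's string correspondence for the Stieltjes function built from the even/odd parts. Two places are genuinely thin, though. First, in the necessity direction your derivation of conditions 2) and 3) is hand-wavy: the paper's identity (\ref{e sum im}), $\sum_j|\im(1/\k_j)|=\Tm_M-\l$, is what actually pins down 2), and it is not obtained by ``Jensen's formula'' but by reading off the linear coefficient in the product representation of $F$; similarly 3) comes from the $z^2$-coefficient (essentially Proposition~\ref{p Stm=}). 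Second, in the sufficiency direction you correctly flag the delicate point, but you should be aware that the step ``conditions 2)--3) are exactly what is needed to place $\mathbf q$ in the class realized by a string of $\S$'' is the heart of the matter and is where \cite{KN89} does real work; it is not automatic that the Stieltjes function you build has the right behaviour at $0$ and at $\infty$ to land in the Krein class with $\Tm_M<\infty$ and $\Stm_M<\infty$, and one must also check that the resulting $F$ coincides with your Weierstrass product up to a constant, not merely a nonvanishing entire factor (otherwise multiplicities could be wrong). None of this is fatal to your outline, but a complete proof would have to supply these computations rather than assert them.
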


According to the last equality in \cite[Sec.4.1]{KN89},
 \begin{equation} \label{e sum im}
 \sum_j | \im (1/\k_j) | = \Tm_M - \l .
 \end{equation}

We use the following power series decomposition of  $\vphi$ (see e.g. \cite[Sec. 1.1]{KN89})
\begin{eqnarray}
\vphi (x , \k ) = 1 - \vphi_1 (x) \k^2 + \vphi_2 (x) \k^4 - \vphi_3 (x) \k^6 + \dots , \label{e pow ser}\\
\vphi_0 (x) \equiv 1 , \ \  \vphi_{j} (x) = \int_{-\infty}^x (x - s)
\vphi_{j-1} (s) \d M (s) , \ \ j \in \N , \label{e phij}
\end{eqnarray}
 to get a formula for the statical moment in terms of quasi-eigenvalues' positions, $\l$,  and $\Tm_M$.

\begin{prop} \label{p Stm=}
\begin{eqnarray}
\Stm_M & = & \sum_{\re k_j >0} \frac{1}{|\k_j|^{2}} +
4 \sum_{\substack{\re k_j, \re k_n >0\\ j \neq n}}
\frac{\im \k_j \im \k_n }{|\k_j|^2 |\k_n|^2 } +
2 \sum_{\substack{\re k_j > 0 \\ \re k_n = 0}}
\frac{\im \k_j \im \k_n }{|\k_j|^2 |\k_n |^2} \notag \\
& + &
\sum_{\substack{\re k_j = \re k_n = 0 \\ j \neq n }} \frac{\im \k_j \im \k_n }{|\k_j|^2 |\k_n|^2 } + \l \left( \Tm_M - \frac{\l}{2} \right).  \label{e Stm=}
\end{eqnarray}
\end{prop}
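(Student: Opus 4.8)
The plan is to reduce to reduced strings, to express $\Stm_M$ via the first Taylor coefficients of $F(\cdot;\d M)$ at $0$, and to evaluate those through the canonical product of $F$ over $K(\d M)=\{\k_j\}$.

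First I would reduce to the reduced case. Let $\d M_{\mathrm{red}}$ be obtained from $\d M$ by deleting $(a_2,1]$ --- on which $\d M$ is Lebesgue's measure --- and shifting the remaining mass to the right by $\l=1-a_2$, so that $\d M_{\mathrm{red}}$ is the image of $\d M\uph(-\infty,a_2]$ under $x\mapsto x+\l$. Then $K(\d M_{\mathrm{red}})=K(\d M)$ (Section \ref{s tools}), and splitting the integrals defining $\Tm$ and $\Stm$ over $(-\infty,a_2]$ and $(a_2,1]$ and using $M(a_2)=\Tm_M-\l$ gives, by a direct computation, $\Tm_{M_{\mathrm{red}}}=\Tm_M-\l$ and $\Stm_M=\Stm_{M_{\mathrm{red}}}+\l(\Tm_M-\l/2)$. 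Since the four sums in (\ref{e Stm=}) depend only on $\{\k_j\}$, which is unchanged, it suffices to prove the identity without the summand $\l(\Tm_M-\l/2)$ for reduced strings.

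So let $\d M$ be reduced. From (\ref{e pow ser})--(\ref{e phij}), $\vphi(1,z)=1-\vphi_1(1)z^2+O(z^4)$ with $\vphi_1(1)=\int_{(-\infty,1]}(1-s)\,\d M=\Stm_M$, and $\pa_x^+\vphi(1,z)=-z^2\int_{(-\infty,1]}\vphi(s,z)\,\d M(s)$, where the integral is a power series in $z^2$ with constant term $\Tm_M$; hence $-\i\,\pa_x^+\vphi(1,z)/z=\i\Tm_M z+O(z^3)$ is odd in $z$. Therefore $F(z)=1+\i\Tm_M z-\Stm_M z^2+O(z^3)$: the coefficient of $z$ in $F$ is $\i\Tm_M$ (this is (\ref{e sum im}) for $\l=0$), and $\Stm_M$ is minus the coefficient of $z^2$. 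Now, by the theory of Krein strings (\cite{KK68_II,KN79,KN89}), $F$ is entire of order $\le1$ with $F(0)=1$ and zero set $\{\k_j\}$, and $\sum_j|\k_j|^{-2}<\infty$; thus its Hadamard factorization is $F(z)=e^{bz}\prod_j(1-z/\k_j)e^{z/\k_j}$, and since the primary factor $E_1(w)=(1-w)e^w$ has $E_1'(0)=0$, comparing with the expansion above yields $b=\i\Tm_M$. Taking logarithms, $\log F(z)=\i\Tm_M z-\frac12\bigl(\sum_j\k_j^{-2}\bigr)z^2+O(z^3)$ with the series absolutely convergent, so
\[
\Stm_M=\frac{\Tm_M^{2}}{2}+\frac12\sum_j\frac1{\k_j^{2}} .
\]

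It remains to unfold this expression. Since $\{\k_j\}$ is symmetric about $\i\R$, the series $\sum_j\k_j^{-2}$ is real and equals $\sum_j\bigl(|\k_j|^{-2}-2(\im\k_j)^2|\k_j|^{-4}\bigr)$, while $\Tm_M=\sum_j\im\k_j/|\k_j|^2$ by (\ref{e sum im}). Writing $t_j:=\im\k_j/|\k_j|^2$, expanding $(\sum_j t_j)^2$, splitting index pairs according to the signs of $\re\k_j$ and $\re\k_n$, using that $\k\mapsto-\overline{\k}$ is a sign-reversing bijection of $\{\k_j\}$ preserving $t_j$ and $|\k_j|$, and using $t_j^2=|\k_j|^{-2}$ when $\re\k_j=0$, the ``diagonal'' terms $\sum_j t_j^2$ cancel and precisely the four sums of (\ref{e Stm=}) survive; adding back $\l(\Tm_M-\l/2)$ completes the proof. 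I expect the main obstacle to be not any single hard idea but the bookkeeping here: the one genuinely external fact is the order-$\le1$/canonical-product statement for $F$ from Krein string theory used above, and the one place slips are easy is keeping straight which pairs of indices contribute with which numerical coefficient and how the $\re\k_j<0$ indices fold onto the $\re\k_j>0$ ones --- a computation best cross-checked on the one-atom strings $F(z)=1+\i A z-A(1-x_0)z^2$ of Proposition \ref{p Delt}.
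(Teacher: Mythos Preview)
Your argument is correct and lands on the same identity, but the route differs from the paper's in two organizational respects worth noting. The paper does not reduce to $\l=0$; it quotes the Krein--Nudelman product formula \cite[(3.9)]{KN89},
\[
F(\k)=e^{\i\k\l}\prod_{\re\k_j>0}\Bigl[(1-\k/\k_j)(1+\k/\overline{\k_j})\Bigr]\prod_{\re\k_j=0}(1-\k/\k_j),
\]
plugs the power series (\ref{e pow ser}) into the left side, and simply reads off the coefficient of $\k^2$ on both sides; the $e^{\i\k\l}$ factor produces the $\l(\Tm_M-\l/2)$ term directly after one use of (\ref{e sum im}). You instead (a) peel off the $\l$ contribution by the shift $\d M\mapsto\d M_{\mathrm{red}}$, and (b) replace the citation of the explicit product by the growth statement ``$F$ has order $\le1$'' plus Hadamard, then pass through $\log F$ to obtain first the compact form
\[
\Stm_M=\tfrac12\Tm_M^{2}+\tfrac12\sum_j\k_j^{-2},
\]
which the paper records only as a Remark after the proposition, and finally unfold it. Both proofs rest on one external fact from \cite{KN89}; yours trades the ready-made product formula for the order bound, which is a fair swap. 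The only place your write-up is slightly brisk is the last ``unfolding'' paragraph: the numerical coefficients $1,4,2,1$ in (\ref{e Stm=}) come out correctly (with the paper's $j\neq n$ sums read as unordered pairs), and your suggested cross-check on $\d\Delta_{x_0,A}$ does verify this, but in a final version that computation deserves to be written out.
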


\begin{proof}
In our notation, \cite[formula (3.9)]{KN89}
takes the form
\[
\vphi (1,\k) - \i \pa_x^+ \vphi (1,\k) / \k =
e^{\i \k \l }
\prod_{\re k_j >0} \left[ \left( 1 - \frac{\k}{\k_j}\right) \left( 1 + \frac{\k}{\overline{\k_j}} \right) \right]
\prod_{\re k_j =0} \left( 1 - \frac{\k}{\k_j}\right) .
\]
(Note that our $\vphi (1,\k)$  is $\vphi (b,\k^2)$ of  \cite{KN89} and that the formula for $Q$ in \cite{KN89}
is given with a misprint, cf. \cite{KN81} and also \cite[pp. 303-6]{K93}).

Plugging (\ref{e pow ser}) into the left side,
\begin{eqnarray*}
& \left[ 1 - \k ^2 \vphi_1 (1) + O(\k^4) \right] -
\frac{\i}{\k } \left[ - \k^2 \pa_x^+ \vphi_1 (1) + O(\k^4) \right] = \\
& \quad \left[ 1 + \i \k \l - \k^2 \l^2 /2 + O(\k^3) \right]
\prod_{\re k_j >0} \Bigl[ 1 - \k \left( \k_j^{-1} -\overline{\k_j^{-1}} \right) - \k^2 |\k_j|^{-2} \Bigr]
\prod_{\re k_j =0} \left( 1 - \k/\k_j \right) ,
\end{eqnarray*}
and
comparing the coefficient of $\k^2$,
we get
\begin{eqnarray}
- \vphi_1 (1) & = & - \sum_{\re k_j >0} \frac{1}{|\k_j|^{2}} -
4 \sum_{\substack{\re k_j, \re k_n >0\\ j \neq n}}
\frac{\im \k_j \im \k_n }{|\k_j|^2 |\k_n|^2 } -
2 \sum_{\substack{\re k_j > 0 \\ \re k_n = 0}}
\frac{\im \k_j \im \k_n }{|\k_j|^2 |\k_n |^2}
\notag \\
& - &
\sum_{\substack{\re k_j = \re k_n  = 0 \\ j \neq n }} \frac{\im \k_j \im \k_n }{|\k_j|^2 |\k_n|^2 } - \frac{\l^2}{2} - \l \sum_{j} \frac{ \im \k_j }{|\k_j|^2 } \label{e -phi1}
\end{eqnarray}
(the symmetry of $K (\d M)$ w.r.t. $\i \R$ was used to modify the last term).
By (\ref{e phij}), $\vphi_1 (1) = \Stm_M $.
By (\ref{e sum im}), the last term in (\ref{e -phi1})
equals $\l (\Tm_M - \l)$. Taking into account these equalities, one can write (\ref{e -phi1}) as (\ref{e Stm=}).
\end{proof}

\begin{rem}
 Squaring (\ref{e sum im}) and taking again into account the symmetry of $K (\d M)$ w.r.t. $\i \R$, it is not difficult to notice that (\ref{e Stm=}) can be written shorter:
\[
\Stm_M = \frac {1}{2} \Tm_M^2 + \frac {1}{2}
\sum_j \frac{(\re \k_j)^2 - (\im \k_j)^2 }{|\k_j|^4 } .
\]
However (\ref{e Stm=}) is more convenient for the needs
of the next section.
\end{rem}

\section{Proofs of Theorems \ref{t K mes}, \ref{t prop of optim}, and \ref{t sep mult}.}
\label{s PrTh mes}

\begin{lem} \label{l ab in Kr a>0}
 Let $ \k = \alpha + \i \beta \in K (\d M)$ and $ \alpha \neq
 0$.Then the following assertions hold.
\item[(i)]  $\k \not \in \D_{\Tm_M^{-1}} \left(\i \, \Tm_M^{-1} \right)$ and $\k \not \in \D_{\Stm_M^{-1/2}} (0)$.
\item[(ii)] If $\k \in \T_{\Tm_M^{-1}} \left(\i \, \Tm_M^{-1} \right)$, then $K(\d M)$
consists of two simple quasi-eigenvalues $\pm\alpha + \i \beta$ and \\
$\d M = \d \Delta_{1-(2\beta)^{-1}, \,\Tm_M} $.
\item[(iii)] If $\k \in \T_{\Stm_M^{-1/2}}
(0)$, then $K(\d M)$ consists of two simple quasi-eigenvalues
$\pm\alpha + \i \beta$ and \\ $\d M = \d \Delta_{x_0,A} $ with
$\displaystyle (x_0, A) = \left( 1 - \frac{1}{2 (\Stm_M^{-1} - \alpha^2)^{1/2}} , 2\Stm_M\sqrt{\Stm_M^{-1} -
\alpha^2} \right) $.
\item[(iv)] If $ \k \in K_r (\d M)$ (i.e., the multiplicity of
$\k$ is $r$), then $\k \not \in \D_{r \,\Tm_M^{-1}} \left(\i r \,
\Tm_M^{-1} \right)$ and $\k \not \in \D_{\sqrt{r\, \Stm_M^{-1}}} \,
(0)$.
\end{lem}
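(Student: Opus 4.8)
The plan is to extract, from the description of $K(\d M)$ recorded in Section~\ref{s tools}, a ``mass inequality'' and a ``moment inequality'', and then to analyse the cases in which they are attained with equality.

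\textbf{Reformulation.} First I would note that, since $\im\k_j>0$, one has $|\im(1/\k_j)|=\im\k_j/|\k_j|^2$, and that for $\rho>0$
\[
\k\in\D_\rho(\i\rho)\iff|\k|^2<2\rho\,\im\k,\qquad \k\in\D_{\sqrt{\rho}}(0)\iff|\k|^2<\rho,
\]
with equality on the corresponding circles. Hence, writing $\k=\alpha+\i\beta$, assertions (i) and (iv) are equivalent to the inequalities $\Tm_M\ge 2r\beta/|\k|^2$ and $\Stm_M\ge r/|\k|^2$ for $\k\in K_r(\d M)$ with $\alpha\ne0$ (the case $r=1$ being (i)).

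\textbf{The two inequalities.} By condition~1) of Theorem~\ref{t KN}, $-\overline\k=-\alpha+\i\beta$ is also a quasi-eigenvalue of the same multiplicity $r$, and it differs from $\k$ because $\alpha\ne0$. Therefore (\ref{e sum im}) gives
\[
\Tm_M-\l=\sum_j|\im(1/\k_j)|\ \ge\ \frac{r\beta}{|\k|^2}+\frac{r\beta}{|\k|^2},
\]
and $\l\ge0$ yields the mass inequality. For the moment inequality I would use (\ref{e Stm=}): assuming without loss of generality $\re\k>0$ (otherwise replace $\k$ by $-\overline\k$, of the same modulus and multiplicity), the contribution of $\k$ to the first sum is $r/|\k|^2$, while every remaining summand of (\ref{e Stm=}) is nonnegative — for the last one, $\Tm_M\ge\l$ (as $M(1)=M(a_2)+(1-a_2)$) gives $\l(\Tm_M-\l/2)\ge\l^2/2\ge0$. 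This proves (i) and (iv).

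\textbf{Equality cases (ii), (iii).} Suppose $\k\in\T_{\Tm_M^{-1}}(\i\Tm_M^{-1})$, i.e.\ $\Tm_M=2\beta/|\k|^2$. Comparing with the mass inequality forces $r=1$, and the displayed chain of inequalities collapses to equalities: $\l=0$ and $\sum_j|\im(1/\k_j)|$ is exhausted by the terms from $\k$ and $-\overline\k$, so $K(\d M)=\{\pm\alpha+\i\beta\}$ with both quasi-eigenvalues simple and $\d M$ reduced. By the uniqueness part of Theorem~\ref{t KN}, $\d M$ is the only reduced string with this quasi-eigenvalue set; and by Proposition~\ref{p Delt}(i) the atom string $\d\Delta_{1-(2\beta)^{-1},\,\Tm_M}$ — reduced, since no interval $(x,1]$ with $x<1$ carries Lebesgue measure — has quasi-eigenvalues $\i\beta\pm\sqrt{|\k|^2-\beta^2}=\pm\alpha+\i\beta$, using $\Tm_M=2\beta/|\k|^2$; this gives (ii). Part (iii) is analogous, now reading equality from (\ref{e Stm=}): $\Stm_M=1/|\k|^2$ forces $r=1$ and, summand by summand, $\l=0$, the absence of any further quasi-eigenvalue with positive real part (from the first sum), and the absence of purely imaginary quasi-eigenvalues (from the mixed sum, whose positive-real-part factor is nonzero). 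So again $K(\d M)=\{\pm\alpha+\i\beta\}$, $\d M$ is reduced, and matching with Proposition~\ref{p Delt}(i) — using $\beta=\sqrt{|\k|^2-\alpha^2}=\sqrt{\Stm_M^{-1}-\alpha^2}$ — produces the stated pair $(x_0,A)$.

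\textbf{Main difficulty.} The inequalities are immediate once (\ref{e sum im}) and (\ref{e Stm=}) are in hand; the delicate point is the equality analysis — ruling out multiplicity $\ge2$ on the two circles, checking that the vanishing of the cross terms of (\ref{e Stm=}) genuinely excludes all other quasi-eigenvalues, and reconciling the explicit atom string of Proposition~\ref{p Delt} with the abstract uniqueness of Theorem~\ref{t KN}.
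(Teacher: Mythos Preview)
Your proof is correct and follows essentially the same route as the paper's: deriving the two inequalities from (\ref{e sum im}) and (\ref{e Stm=}) via the symmetry $\k\mapsto-\overline\k$ and the bound $\l\le\Tm_M$, then reading off the equality cases and invoking the uniqueness in Theorem~\ref{t KN} together with Proposition~\ref{p Delt}. The paper's argument is terser but identical in substance.
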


\begin{proof}
If $ \k \in K (\d M)$, formula (\ref{e sum im}) and Theorem \ref{t KN} (1)
imply $ \frac{2 \beta}{\alpha^2 + \beta^2} \leq \Tm_M $. Theorem
\ref{t KN} (1) and (\ref{e Stm=}) imply $(\alpha^2 + \beta^2)^{-1}
\leq \Stm_M $ (note that the definition of $\l$ yields $\l \le
\Tm_M$). These inequalities are equivalent to \textbf{(i)}. Note
that the equalities hold exactly when $K(\d M) = \{ \pm\alpha + \i
\beta \}$ and $\l=0$. Combining this, Proposition \ref{p Delt}, and
the uniqueness statement of Theorem \ref{t KN} one can easily get
\textbf{(ii)} and \textbf{(iii)}. In the case when $\k \in K_r (\d
M)$, (\ref{e sum im}) and (\ref{e Stm=}) yield \textbf{(iv)}.
\end{proof}

The case $\k \in \i \R$ is simpler, and the above arguments lead to the following lemma.

\begin{lem} \label{l ab in Kr a=0}
Let $ \i \beta \in K (\d M)$. Then the following assertions hold.
\item[(i)] $\beta \ge \Tm_M^{-1} $.
\item[(ii)] If $ \beta = \Tm_M^{-1} $, then $K(\d M)$ consists of a single simple quasi-eigenvalue $\i \beta$ and
$\d M = \d \Delta_{1,\Tm_M} $.
\item[(iii)] If $ \i \beta \in K_r (\d M)$, then $\beta \ge r \Tm_M^{-1} $.
\end{lem}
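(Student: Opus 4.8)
The plan is to mirror the argument of Lemma \ref{l ab in Kr a>0}, specializing to the case $\k = \i\beta$ with $\beta>0$. For \textbf{(i)}: since $\i\beta \in K(\d M)$, Theorem \ref{t KN}(1) forces $-\i\beta \in K(\d M)$ unless $\i\beta$ is its own mirror image — which it is, since it lies on $\i\R$. So the symmetry condition is automatic here and gives no second point for free. The key input is instead formula (\ref{e sum im}): $\sum_j |\im(1/\k_j)| = \Tm_M - \l$. The term corresponding to $\k_j = \i\beta$ contributes $|\im(1/(\i\beta))| = |\im(-\i/\beta)| = 1/\beta$ to the left side, and all other terms are nonnegative, so $1/\beta \le \Tm_M - \l \le \Tm_M$ (using $\l \ge 0$). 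Hence $\beta \ge \Tm_M^{-1}$, which is \textbf{(i)}.

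For \textbf{(ii)}: equality $\beta = \Tm_M^{-1}$ in the chain above forces $\l = 0$ (so the string is reduced) and forces every other term in the sum (\ref{e sum im}) to vanish. Since each $\k_j$ has $\im\k_j > 0$ by Theorem \ref{t KN}(2), the vanishing of $|\im(1/\k_j)| = \im\k_j/|\k_j|^2$ for $j$ with $\k_j \ne \i\beta$ is impossible unless there are no such $j$ — with the caveat of multiplicity. Actually I should phrase it via the contrapositive: if $\i\beta$ had multiplicity $\ge 2$, it alone contributes $\ge 2/\beta$, contradicting $2/\beta > \Tm_M$; and any further distinct quasi-eigenvalue would add a strictly positive amount. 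So $K(\d M) = \{\i\beta\}$ as a single simple quasi-eigenvalue, and $\l = 0$. By the uniqueness statement in Theorem \ref{t KN}, there is a unique reduced string with this spectrum; by Proposition \ref{p Delt}(ii), $\d\Delta_{1,A}$ has quasi-eigenvalue set $\{\i A^{-1}\}$, which is reduced (its $a_2 = 1$). Taking $A = \beta^{-1} = \Tm_M$ exhibits such a string, so $\d M = \d\Delta_{1,\Tm_M}$. I should double-check that $\Tm$ of $\d\Delta_{1,\Tm_M}$ is indeed $\Tm_M$, which is immediate, and that it is reduced.

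For \textbf{(iii)}: if $\i\beta \in K_r(\d M)$, then the sequence $\{\k_j\}$ contains $\i\beta$ with multiplicity $r$, so the left side of (\ref{e sum im}) is at least $r \cdot |\im(1/(\i\beta))| = r/\beta$. Hence $r/\beta \le \Tm_M - \l \le \Tm_M$, giving $\beta \ge r\,\Tm_M^{-1}$. I do not expect any real obstacle here — the argument is a direct specialization and the only point requiring a little care is the bookkeeping of multiplicities in (\ref{e sum im}) and the appeal to the uniqueness clause of Theorem \ref{t KN} in part (ii); everything else is a one-line inequality from the already-established moment formulas. One could equally invoke Lemma \ref{l ab in Kr a>0}(iv) in the limiting form, but the direct argument from (\ref{e sum im}) is cleaner on the imaginary axis since the $\D_{r\Tm_M^{-1}}(\i r\Tm_M^{-1})$ disk is tangent to $\i\R$ exactly at $\i r^{-1}\Tm_M$ — so $\i\beta \notin \D$ is equivalent to $\beta \ge r\Tm_M^{-1}$, matching \textbf{(iii)}.
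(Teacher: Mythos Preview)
Your proof is correct and follows exactly the route the paper indicates (the paper's ``proof'' is just the sentence that the same arguments as in Lemma~\ref{l ab in Kr a>0} apply): the key input is formula~(\ref{e sum im}), which immediately gives (i) and (iii), and the equality analysis together with Theorem~\ref{t KN} and Proposition~\ref{p Delt}(ii) gives (ii).

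One small slip in your closing parenthetical: the disc $\D_{r\,\Tm_M^{-1}}(\i r\,\Tm_M^{-1})$ is tangent to $\R$ at $0$, not to $\i\R$; on the imaginary axis the condition $\i\beta\notin\D$ would read $\beta\ge 2r\,\Tm_M^{-1}$, so Lemma~\ref{l ab in Kr a>0}(iv) does not literally specialize to (iii) --- but as you say, the direct argument from (\ref{e sum im}) is the right one and is unaffected.
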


\begin{proof}[Proof of Theorem  \ref{t K mes}]
The inclusion $ K (\Ad ) \subset \i [ m^{-1}, + \infty ) \cup \left( \C_+ \setminus \left[ \D_{m^{-1}} ( \i m^{-1}) \cup \D_{S^{-1/2}} (0) \right] \right)  $
follows immediately from Lemma \ref{l ab in Kr a>0} (i) and Lemma \ref{l ab in Kr a=0} (i).
It is not difficult to obtain the inverse inclusion $ K (\Ad ) \supset \dots$ from Proposition \ref{p Delt} considering all the strings of type
$ \d \Delta_{x_0, A} $ that belong to $\Ad$.
\end{proof}

\begin{proof}[Proof of Theorem  \ref{t prop of optim}]
Let $\alpha \in (- S^{-1/2},S^{-1/2})$. Then Corollary \ref{c I mes} yields $\I (\alpha) > 0$.  Theorem  \ref{t K mes} implies that there exists at least one $\d M \in \Ad$
with a quasi-eigenvalue at $\k^{[\alpha]}$. In the case $\alpha \neq 0$, $\k^{[\alpha]}$ belongs to at least one of the circles $ \T_{m^{-1}} \left(\i m^{-1} \right)$,  $\T_{S^{-1/2}}
(0)$. By Lemma  \ref{l ab in Kr a>0} (i), this is possible only if one of the bounds from the definition of $\Ad$ is reached. That is, only if
one of the equalities $\Tm_{M} = m$, $\Stm_{M} = S$ hold. Now Lemma  \ref{l ab in Kr a>0} (ii)-(iii) easily implies the statement of the theorem.
The case $\alpha = 0$ can be treated similarly with the use Lemma \ref{l ab in Kr a=0}.
\end{proof}

\begin{proof}[Proof of Theorem  \ref{t sep mult}]
By Lemma \ref{l ab in Kr a>0} (iv),
$
K^{\mathrm{mult}} (\Ad) \setminus \i \R \, \subset \, \C_+ \setminus \left[  \D_{2 m^{-1}} (\i 2 m^{-1} ) \cup \D_{\sqrt{2 S^{-1}}} \, (0) \right] .
$
Lemma \ref{l ab in Kr a=0} (iii) yields $K^{\mathrm{mult}} (\Ad) \cap \i \R \subset \i [2m^{-1},+\infty)$.
This easily implies the theorem.
\end{proof}

\end{document}